\newtheorem{theorem}{Theorem}[section]
\newtheorem{claim}[theorem]{Claim}
\newtheorem{corollary}[theorem]{Corollary}
\theoremstyle{definition}
\theoremstyle{remark}
\numberwithin{equation}{section}
\newtheorem*{question*}{Question}
\theoremstyle{example}
\newtheorem{example}[theorem]{Example}
\begin{document}

\vspace{0.5in}

\title[On a generalization of the Cartwright-Littlewood]%
{On a generalization of the Cartwright-Littlewood fixed point theorem for planar homeomorphisms}
%{ \\Example For The Authors}

\author{J. P. Boro\'nski}
\address[J. P. Boro\'nski]{National Supercomputing Centre IT4Innovations, Division of the University of Ostrava,
Institute for Research and Applications of Fuzzy Modeling,
30. dubna 22, 70103 Ostrava,
Czech Republic -- and -- Faculty of Applied Mathematics,
AGH University of Science and Technology,
al. Mickiewicza 30,
30-059 Krak\'ow,
Poland}
\email{jan.boronski@osu.cz}
%    Current address (if needed): 
%\curraddr{}

%\thanks{The first author was supported in part by NSF Grant \#000000.}

%    Information for second author (if needed): 
%\thanks{Support information for the second author.}

%    General info
\subjclass[2000]{37E30, 37C25, 55C20, 54H25}

\keywords{fixed point, periodic orbit, planar homeomorphism, acyclic continuum}

\begin{abstract} 
 We prove a generalization of the fixed point theorem of Cartwright and Littlewood. Namely, suppose $h : \mathbb{R}^2 \to\mathbb{R}^2$ is an orientation preserving planar homeomorphism, and let $C$ be a continuum such that $h^{-1}(C)\cup C$ is acyclic. If there is a $c\in C$ such that $\{h^{-i}(c):i\in\mathbb{N}\}\subseteq C$, or $\{h^i(c):i\in\mathbb{N}\}\subseteq C$, then $C$ also contains a fixed point of $h$. Our approach is based on Morton Brown's short proof of the result of Cartwright and Littlewood. In addition, making use of a linked periodic orbits theorem of Bonino we also prove a counterpart of the aforementioned result for orientation reversing homeomorphisms, that guarantees a $2$-periodic orbit in $C$ if it contains a $k$-periodic orbit ($k>1$).
\end{abstract}

\maketitle
\noindent
\section{Introduction}
In  1951 M.L. Cartwright and J.E Littlewood \cite{CaLi} studied van der Pol's differential equation and were led to investigate the existence of fixed points of planar homeomorphisms in invariant continua. The continuum that they encountered had boundary that was not locally connected and was potentially even indecomposable. Recall that a {\itshape continuum} is a connected and compact nondegenerate set. It is {\itshape indecomposable} if it is not the union of any two proper subcontinua. A planar continuum is {\itshape acyclic} if it does not separate the plane. The following is the celebrated Cartwright-Littlewood fixed point theorem.
\vspace{0.25cm}

\noindent
{\bfseries Theorem A.} {\itshape (Cartwright\&Littlewood \cite{CaLi}) Let $f : \mathbb{R}^2 \to  \mathbb{R}^2$ be an orientation preserving planar homeomorphism. Suppose there is an acyclic continuum $C$ invariant under $f$; i.e. $f(C)=C$. Then there is a fixed point of $f$ in $C$.}
\vspace{0.25cm}

A number of authors provided alternative proofs of the result, many of which are substantially shorter then the original one, including those given by M. Brown \cite{BM} and O.H. Hamilton \cite{Ha}. M. Barge and R. Gillette proved that the continuum considered by Cartwright and Littlewood was in fact indecomposable \cite{BG}. Barge and J. Martin also showed \cite{BaM2} that any inverse limit of arcs with a single continuous bonding map gives an acyclic continuum (called \textit{arc-like}\footnote{A continuum is called \textit{arc-like, chainable,} or \textit{snakelike} if it can be given as the inverse limit of arcs with continuous bonding maps.}) that is an attractor of a planar homeomorphism. The class of arc-like continua includes the famous hereditarily indecomposable \textit{pseudo-arc} \cite{Bi1} (that is homogeneous, contains no arcs and is nowhere locally connected) and Knaster buckethandle continuum \cite{AF} (cf. topological horseshoe \cite{Sh}). There is often a strong connection between topology of the attracting continua and complexity of the associated dynamics, like the link between indecomposabilty and chaos \cite{BM1} (a chaotic decomposable arc-like attractor of a planar homeomorphism without indecomposable subcontinua recently constructed by P. Oprocha and the author \cite{BO} is depicted in Figure \ref{fig:hd}). In addition, V. A. Pliss in \cite{Pl} showed that any acyclic plane continuum is the maximal bounded closed set invariant under a transformation $F$, where $F$ is a solution of certain dissipative system of differential equations. Note that Theorem A holds true for orientation reversing homeomorphism, by the result of H. Bell \cite{BH}, and K. Kuperberg generalized Bell's result to plane separating continua \cite{KK},\cite{KK2}. However, it remains a major 100-year-old open problem if every acyclic planar continuum has the fixed point property (see Scottish Book Problem 107 \cite{MD}, and \cite{BFMOT} for the most recent developments regarding the subject).  
\begin{figure}
	\centering
		\includegraphics[width=0.95\textwidth]{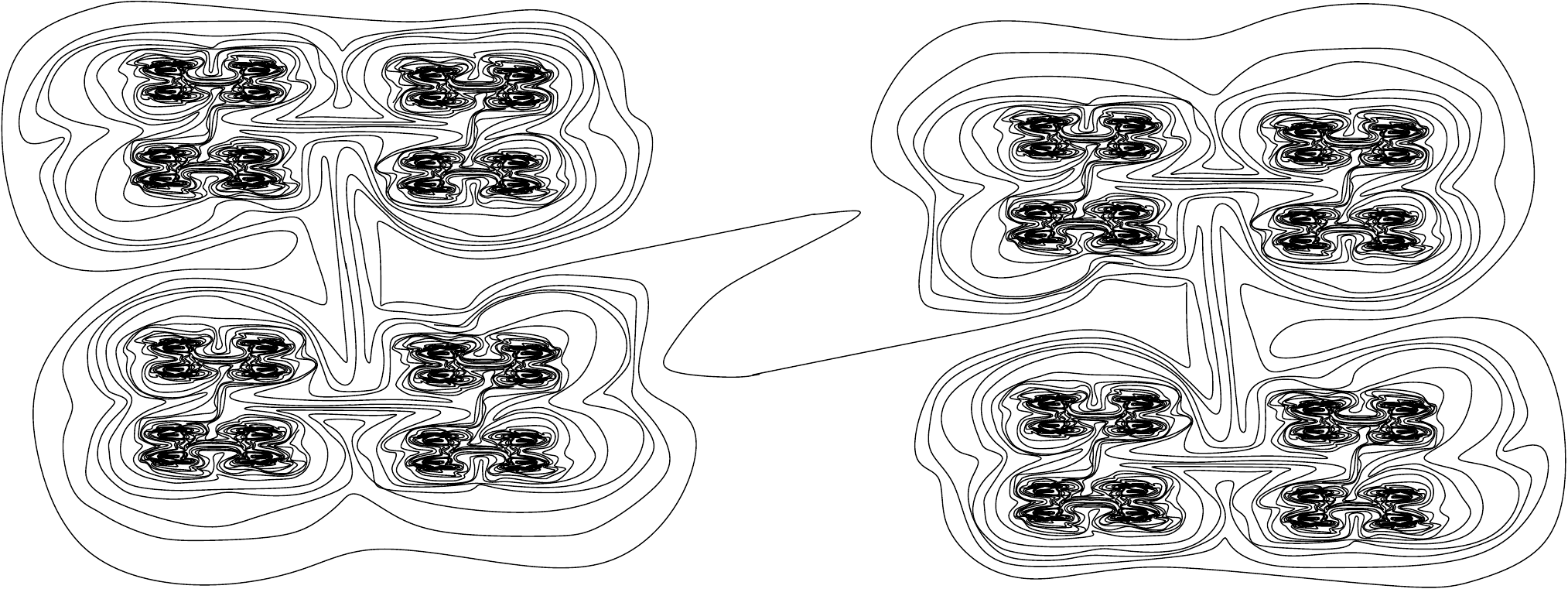}
	\label{fig:hd}
	\caption{An (acyclic) arc-like chaotic attractor without indecomposable subcontinua.}
\end{figure}

In the present paper we are interested in generalizing Theorem A in the following direction: suppose a continuum $C$ is not necessarily invariant under $h$; can one find some natural conditions under which a component of $C\cap h(C)$ contains a fixed point? Finding such conditions has a potential of giving a lower bound for the number of fixed points of $h$, that could prove very useful if $C\cap h(C)$ has more than just one component. Such conditions could also aid in locating fixed points of $h$. With this goal in mind, we shall demonstrate that given a continuum $C$ in order for $h$ to have a fixed point in $C$ it suffices that $h^{-1}(C)\cup C$ is acyclic and there is a backward or forward orbit $\mathcal{O}$ entirely contained in $C$. As a consequence we get that if $h$ has no fixed point in $C$ then $C$ also does not contain any periodic orbits. A related result has been recently obtained by G. Ostrovski \cite{Os}, with very strong assumptions on the topological structure of $C$. Namely, by entirely different methods, Ostrovski proved the following two theorems. 
\vspace{0.25cm}

\noindent
{\bfseries Theorem B.} {\itshape (Ostrovski \cite{Os}) Let $X\subset \mathbb{R}^2$ be a compact, simply connected, locally connected subset of the real plane and let $f : X \to Y \subset \mathbb{R}^2$ be a
homeomorphism isotopic to the identity on $X$. Let $C$ be a connected component of $X \cap Y$. If $f$ has a periodic orbit in $C$, then f also has a fixed point in $C$.}
\vspace{0.25cm}

\noindent
{\bfseries Theorem C.} {\itshape (Ostrovski \cite{Os}) Let $D\subset\mathbb{R}^2$ be a Jordan domain and $f: D \to E\subset \mathbb{R}^2$ an orientation preserving homeomorphism. Let $C$ be a connected
component of $D \cap E$. If $f$ has a periodic orbit in $C$, then $f$ also has a fixed point in $C$.}

Theorem C is in fact a corollary to Theorem B, since it is known that any homeomorphism $f:X\to X$ satysfying the assumptions of Theorem C is isotopic to the identity on $D$ (see \cite{Os} for details). However, Theorem C better encapsulates the motivation for our results, because we formulate them for a rather wide class of continua, which may contain no arcs at all, and therefore they may admit no isotopies connecting two distinct homeomorphisms. Our first result is the following theorem. 
\begin{theorem}\label{main1}
Let $h : \mathbb{R}^2 \to  \mathbb{R}^2$ be an orientation preserving planar homeomorphism, and let $C$ be a continuum such that $h^{-1}(C)\cup C$ is acyclic. If there is a point $c\in C$ such that $\{h^{-i}(c):i\in\mathbb{N}\}\subseteq C$, or
$\{h(c):i\in\mathbb{N}\}\subseteq C$, then $C$ also contains a fixed point of $h$. 
\end{theorem}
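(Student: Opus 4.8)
The plan is to argue by contradiction, reduce the two orbit hypotheses to a single one, and then run a recurrence argument in the spirit of Brown's proof of Theorem A, the delicate point being to force the fixed point produced by Brouwer's theory to lie in $C$ rather than merely somewhere in the plane.

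First I would record two preliminary observations. Set $K=h^{-1}(C)\cup C$. In the forward-orbit case the hypothesis gives $c\in C$ and $h(c)\in C$, so $c\in C\cap h^{-1}(C)$; in the backward-orbit case $h^{-1}(c)\in C\cap h^{-1}(C)$. In either case $K$ is a union of two continua with a common point, hence a continuum, and it is acyclic by assumption. Next, since $h$ is a homeomorphism of the plane it carries non-separating compacta to non-separating compacta, so $C\cup h(C)=h(K)$ is acyclic as well. This lets me treat only the forward-orbit case: a backward orbit of $c$ under $h$ is a forward orbit under $g=h^{-1}$, which is again orientation preserving, and the set $g^{-1}(C)\cup C=h(C)\cup C=h(K)$ needed to apply the forward-case argument to $g$ is acyclic by the remark just made. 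So from now on assume $\{h^{i}(c):i\ge 0\}\subseteq C$, and suppose for contradiction that $h$ has no fixed point in $C$.

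Because the forward orbit of $c$ lies in the compact set $C$, its $\omega$-limit set $\omega(c)$ is a nonempty compact subset of $C$, and a pigeonhole argument shows every $y\in\omega(c)$ is non-wandering: for each neighbourhood $V$ of $y$ there are $i<j$ with $h^{i}(c),h^{j}(c)\in V$, whence $h^{\,j-i}(V)\cap V\neq\emptyset$. Fix such a $y$. Since $y\in C$ and $h$ has no fixed point in $C$ we have $h(y)\neq y$, so a sufficiently small closed topological disk $D$ about $y$ is \emph{free}, i.e. $h(D)\cap D=\emptyset$, while still $h^{n}(D)\cap D\neq\emptyset$ for some $n\ge 2$ coming from the near-return above. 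This is exactly the hypothesis of Brouwer's lemma on free disks (a consequence of the plane translation theorem and the engine of Brown's proof): an orientation preserving plane homeomorphism possessing a returning free disk has a fixed point. The mechanism is the translation-arc construction, in which one joins $h^{i}(c)$ to $h^{i+1}(c)$ by arcs and closes up a returning disk chain; note that since $C$ may contain no arcs at all (it may be a pseudo-arc), these connecting arcs cannot be drawn inside $C$ and must be routed through the complement, and it is precisely here that the acyclicity of $K$—equivalently, the simple connectivity of $S^{2}\setminus K$—is used to carry out the winding/index computation without obstruction.

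The hard part, and the whole content beyond Theorem A, is \emph{localization}: Brouwer's lemma as stated only yields a fixed point somewhere in $\mathbb{R}^2$, whereas I need one inside $C$. To get it I would make the construction quantitative. For each $\varepsilon>0$ choose an $\omega$-limit point $y_\varepsilon\in C$ and a returning free disk $D_\varepsilon$ of diameter $<\varepsilon$ about $y_\varepsilon$ whose associated disk chain $D_\varepsilon\cup h(D_\varepsilon)\cup\dots\cup h^{n}(D_\varepsilon)$ shadows the orbit segment from $h^{i}(c)$ to $h^{j}(c)$, every vertex of which lies in $C$; using that $S^{2}\setminus K$ is simply connected, the connecting arcs can be taken within the $\varepsilon$-neighbourhood of $C$, so the fixed point forced by the index computation lies within $O(\varepsilon)$ of $C$. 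Letting $\varepsilon\to 0$ and passing to a convergent subsequence of these fixed points yields a fixed point in $\overline{C}=C$, contradicting the standing assumption and completing the proof; the two orbit cases are both covered by the reduction of the first step. The step I expect to require the most care is verifying that the returning disk chain can genuinely be confined to a shrinking neighbourhood of $C$—that is, that the acyclicity of $h^{-1}(C)\cup C$, rather than mere acyclicity of $C$, is what makes the index argument simultaneously valid and local.
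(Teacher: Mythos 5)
Your reduction of the backward-orbit case to the forward-orbit case (via $g=h^{-1}$ and the observation that $h(C)\cup C$ is acyclic because homeomorphisms of the plane preserve acyclicity) is correct and matches what the paper does in its CASE II. The genuine gap is in the localization step, and you have in fact pointed at it yourself: the claim that the returning free disk chain $D_\varepsilon\cup h(D_\varepsilon)\cup\dots\cup h^{n}(D_\varepsilon)$ can be confined to an $\varepsilon$-neighbourhood of $C$ is false as stated. You control the diameter of $D_\varepsilon$, but not of its iterates: $h$ is an arbitrary homeomorphism, so $h^{i}(D_\varepsilon)$ can be enormous even though its ``vertex'' $h^{i}(y_\varepsilon)$ lies in $C$. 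More fundamentally, Brouwer's free disk lemma (and Franks' periodic disk chain lemma) produces a fixed point \emph{somewhere} in the plane via an index computation on a curve whose bounded complementary domain is not controlled by the chain; there is no standard quantitative version placing the fixed point within $O(\varepsilon)$ of the chain, and your sketch does not supply one. So the limiting argument ``$\varepsilon\to 0$, extract a convergent subsequence of fixed points'' has nothing to converge.

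The paper sidesteps localization entirely, and this is the idea your proposal is missing. Assume for contradiction that $\operatorname{Fix}(h)\cap C=\emptyset$, let $U$ be the component of $\mathbb{R}^2\setminus\operatorname{Fix}(h)$ containing $C$, and pass to the universal cover $\tau:\tilde U\to U$, which is homeomorphic to $\mathbb{R}^2$. The acyclic continuum $C$ lifts to disjoint homeomorphic copies; choose one copy $\tilde C$ and the lift $\tilde h$ sending a point of $\tau^{-1}(h^{-1}(c))\cap\tilde C$ to a point of $\tau^{-1}(c)\cap\tilde C$. The acyclicity of $C\cup h^{-1}(C)$ is used exactly once, but crucially: if $\tilde h^{-1}(\tilde x)$ landed in a \emph{different} copy $\tilde C'$ of $C$, then $\tilde h^{-1}(\tilde C)\cup\tilde C\cup\tilde C'$ would be a connected set covering the acyclic continuum $C\cup h^{-1}(C)$ while containing two distinct components of $\tau^{-1}(C)$, which is impossible since acyclic continua lift homeomorphically. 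Induction then keeps the entire backward orbit of $\tilde x$ inside the compact set $\tilde C$, so $\tilde h^{-1}$ is an orientation preserving homeomorphism of a plane with a bounded orbit and no fixed point (all fixed points of $h$ were removed before lifting), contradicting Brouwer's theorem in its ordinary, non-localized form. If you want to salvage your free-disk route, you would need to run the translation-arc argument inside this covering space rather than in $\mathbb{R}^2$ itself; done directly in the plane, the index argument cannot see which fixed point it is producing.
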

Note that Theorem \ref{main1} generalizes Theorem A because if $h(C)=C$ then $C\cup h^{-1}(C)=C$ and $\{h^{-i}(c):i\in\mathbb{Z}\}\subseteq C$ for any $c\in C$. It is also quite clear that Theorem \ref{main1} implies Theorem B in case $X$ is 1-dimensional, because in this case $h^{-1}(C)\cup C$ is acyclic as a subcontinuum of the 1-dimensional and simply connected $X$. Finally, Theorem \ref{main1} is false without the assumption that $h^{-1}(C)\cup C$ is acyclic. Indeed, it is enough to consider the rotation $h$ of the plane about the origin by $180^o$, and let $C$ to be the upper semicircle in the unit circle. Then $C$ contains a 2-periodic orbit (the end points) but no fixed point. 

At this point we would like to point out that some different fixed point results for non-invariant continua were proven in \cite[Chapter 5]{BFMOT} by Blokh, Fokkink, Mayer, Oversteegen, and Tymchatyn. The authors showed that every positively oriented map of the complex plane that strongly scrambles the boundary of an acyclic (potentially non-invariant) continuum $C$ must have a fixed point in $C$, and a related result is also proven for certain maps on dendrites. Since positively oriented maps generalize orientation preserving homeomorphisms their results are related to ours, but the emphasis of their approach is more on the structure of $X$, expressed in one-step "`geometric"' conditions, whereas our approach can be considered more of dynamical nature, as determined by iterations of the homeomorphism on $X$. The reader is referred to \cite{BFMOT} for more details.

A natural question to ask next is: what if $h$ reverses orientation? Can one also generalize Bell's Theorem \cite{BH} in the same way? This question seems a little bit more delicate, as we shall now explain.  First note that if $h$ reverses orientation then $h^2$ preserves orientation, and so it is natural to expect for $h$ either a fixed point or point of least period $2$, under assumptions similar to those of Theorem \ref{main1}. Note also that the fact that $h^{-1}(C)\cup C$ is acyclic, for a continuum $C$, does not imply that $h^{-2}(C)\cup C$ is acyclic, so a straightforward application of Theorem \ref{main1} in the orientation reversing case does not automatically give us fixed points for $h^2$. Next recall that an important property of orientation preserving homeomorphisms, that is often used to prove the Cartwright-Littlewood theorem, is the result attributed to L. E. Brouwer \cite{Br}: if a planar homeomorphism has a bounded orbit then it also has a fixed point. This property is not shared by orientation reversing homeomorphisms. In 1981 S. Boyles \cite{By2} constructed an example of such a homeomorphism with EVERY orbit bounded but no fixed points. Inspired by her construction, in Section \ref{example} we will exhibit an example of an orientation reversing homeomorphism $H$ with every orbit bounded, and a continuum $C$, with $H^{-1}(C)\cup C$ acyclic,  that contains periodic points of any even period, but contains no fixed points (in fact $H$ is fixed point free). 

Nonetheless, one can also hope for a counterpart of Theorem \ref{main1} for orientation reversing homeomorphisms, if a periodic point of least period $2$ in $C$ is to be guaranteed instead of a fixed point. We obtain such a result in Theorem \ref{main2}, where we show that assuming a continuum $C$, with the property that $h^{-1}(C)\cup C$ acyclic, contains a periodic orbit of period $k>1$, one can infer the existence of a $2$-periodic point in $C$. This result does not seem to follow from Brown's work but can be proved using Bonino's powerful result on linked periodic orbits \cite{Bo2}. Very similar arguments were previously used by the author \cite{BoJ} to show that any orientation reversing planar homeomorphism with a $k$-periodic orbit ($k>1$) contained in an invariant acyclic continuum $C$ must have a 2-periodic orbit in $C$. 
\begin{theorem}\label{main2}
Let $g : \mathbb{R}^2 \to  \mathbb{R}^2$ be an orientation reversing planar homeomorphism, and let $C$ be a continuum which contains a periodic orbit of least period $k>1$. If $g^{-1}(C)\cup C$ is acyclic then $C$ contains a point of least period $2$.
\end{theorem}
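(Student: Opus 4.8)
The plan is to bypass the orientation-preserving machinery that underlies Theorem~\ref{main1}, replacing the implicit appeal to Brouwer's translation theorem by Bonino's linked periodic orbit theorem. First I would dispose of the trivial case: if the least period $k$ equals $2$, the given orbit already contains a point of least period $2$ lying in $C$, and there is nothing to prove; so assume $k\geq 3$. Compactify the plane to the sphere $S^2=\mathbb{R}^2\cup\{\infty\}$ and extend $g$ to a homeomorphism $\hat g\colon S^2\to S^2$ fixing $\infty$; since $g$ reverses orientation, so does $\hat g$. The orbit $O=\{c,g(c),\dots,g^{k-1}(c)\}\subseteq C$ persists as a periodic orbit of $\hat g$ of period $k\geq 3$, so Bonino's theorem applies and produces a $2$-periodic orbit $\{a,b\}$ of $\hat g$ that is linked with $O$. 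Because $\infty$ is fixed it cannot belong to a $2$-periodic orbit, so $\{a,b\}\subseteq\mathbb{R}^2$, and $a,b$ are genuine points of least period $2$ for $g$.

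Next I would argue by contradiction and try to force one of $a,b$ into $C$; note that the conclusion requires only a single point of least period $2$ in $C$, not the whole orbit. Suppose $C$ contains no such point; then $a\notin C$ and $b\notin C$. Using $g(a)=b$, $g(b)=a$ together with $a,b\notin C$, one checks that $a,b\notin g^{-1}(C)$ as well, since $a\in g^{-1}(C)\iff b\in C$ and $b\in g^{-1}(C)\iff a\in C$. Hence the $2$-periodic orbit misses the set $D:=g^{-1}(C)\cup C$ entirely. Observe that $D$ is a continuum: $O\subseteq C\cap g^{-1}(C)$ shows that $C$ and $g^{-1}(C)$ overlap, and by hypothesis $D$ is acyclic. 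An acyclic planar continuum is cellular, i.e. the intersection of a nested sequence of closed topological disks, so choosing one such disk close enough to $D$ yields a closed disk $\Delta\supseteq D$ with $\Delta\cap\{a,b\}=\emptyset$.

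The heart of the argument is then a one-step computation, which is exactly what the hypothesis on $g^{-1}(C)\cup C$ is designed to produce. From $\Delta\supseteq C$ we get $O\subseteq C\subseteq\Delta$, and from $\Delta\supseteq g^{-1}(C)$ we get $g(\Delta)\supseteq g(g^{-1}(C))=C\supseteq O$; hence $O$ lies in the interior of both disks $\Delta$ and $g(\Delta)$. On the other hand $a\in g(\Delta)\iff b=g^{-1}(a)\in\Delta$ and $b\in g(\Delta)\iff a\in\Delta$, both false, so $\{a,b\}\cap(\Delta\cup g(\Delta))=\emptyset$. Thus the Jordan curve $\partial\Delta$, as well as its image $\partial g(\Delta)$, separates $O$ from $\{a,b\}$ in a way compatible with one application of $g$. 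This is precisely the configuration that is excluded when $\{a,b\}$ and $O$ are linked, so we obtain a contradiction and conclude that $C$ must contain a point of least period $2$.

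The step I expect to be the main obstacle is this last one: matching the disk $\Delta$ to the precise meaning of \emph{linked} in Bonino's theorem. What must be verified is that the pair $(\Delta,g(\Delta))$ --- two disks each engulfing $O$ and each disjoint from $\{a,b\}$ --- genuinely constitutes an ``unlinking'' in Bonino's sense, i.e. that it contradicts the linking of the $2$-periodic orbit with $O$. This is the same mechanism used in the invariant case \cite{BoJ}, where an invariant acyclic $C$ gives $g(\Delta)\supseteq g(C)=C\supseteq O$ directly; the content of the present generalization is the observation that the weaker hypothesis ``$g^{-1}(C)\cup C$ acyclic'' already reproduces the two containments $O\subseteq\Delta$ and $O\subseteq g(\Delta)$ together with $\{a,b\}\cap(\Delta\cup g(\Delta))=\emptyset$ on which that argument rests, so the unlinking step should carry over essentially verbatim.
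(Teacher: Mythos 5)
Your proposal is correct and follows essentially the same route as the paper: reduce to $k>2$, compactify to $\mathbb{S}^2$, apply Bonino's theorem, and show that if the resulting $2$-periodic orbit missed $C$ (hence also $g^{-1}(C)$), a closed disk $\Delta$ around the cellular continuum $g^{-1}(C)\cup C$ would yield a Jordan curve $\partial\Delta$ separating the two orbits, with $O\subseteq\Delta\cap g(\Delta)$ and $\{a,b\}$ disjoint from $\Delta\cup g(\Delta)$. The one step you flag as the main obstacle is closed in the paper exactly as you anticipate: the curve and its image are both essential Jordan curves in the open annulus $\mathbb{S}^2\setminus(C\cup\{\infty\})$, hence freely isotopic there, and since both miss a disk neighborhood of $\infty$ containing the $2$-periodic orbit, that isotopy can be pushed off the disk so as to take place in $\mathbb{S}^2\setminus(\mathcal{O}\cup\mathcal{O}')$, contradicting linkedness.
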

Although, based on Theorem \ref{main1}, one could expect that there must be either a fixed point or a $2$-periodic point in $C$, one can see that Theorem \ref{main2} determines more than just a mere alternative. In particular, $C$ may or may not contain a fixed point, but it is guaranteed to contain a point of least period $2$. In the last section of our paper we show that the above result is in a sense the best possible. In particular, one can neither expect a fixed point under the assumptions, nor can one weaken the assumptions and require that a non-periodic orbit contained in $C$ will force a point of least period $2$. Note that, as a corollary to the above two theorems, we obtain the following.
\begin{corollary}\label{main3}
Let $h : \mathbb{R}^2 \to  \mathbb{R}^2$ be a planar homeomorphism and $X$ be a one-dimensional acyclic continuum. Suppose there are $n$ components of $X\cap h(X)$, each of which contains a periodic orbit of period $k>1$. Then:
\begin{enumerate}
\item if $h$ preserves orientation then $h$ has at least $n$ fixed points in $X$;
\item if $h$ reverses orientation then $h$ has at least $\left\lfloor \frac{n+1}{2}\right\rfloor$ orbits of period $2$ in $X$.
\end{enumerate}
\end{corollary}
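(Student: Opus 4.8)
The plan is to prove both parts by applying Theorem \ref{main1} (respectively Theorem \ref{main2}) separately to each of the $n$ relevant components and then combining the local conclusions combinatorially. Write $C_1,\dots,C_n$ for the $n$ components of $X\cap h(X)$ that contain periodic orbits of period $k>1$. Being components of the compact set $X\cap h(X)$, each $C_i$ is compact and connected, and since it carries a periodic orbit of period $k>1$ it is nondegenerate; hence every $C_i$ is a continuum. Moreover, distinct components are pairwise disjoint, a fact I will lean on at the counting stage.

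First I would check that each $C_i$ satisfies the hypotheses of the two main theorems. Fix $i$ and let $O\subseteq C_i$ be the periodic orbit. Since $C_i\subseteq X\cap h(X)\subseteq h(X)$, we get $h^{-1}(C_i)\subseteq X$, and as $C_i\subseteq X$ as well, the set $h^{-1}(C_i)\cup C_i$ lies in $X$. It is also connected: $O$ is fully invariant, so $O=h^{-1}(O)\subseteq h^{-1}(C_i)$ while $O\subseteq C_i$, whence the two continua $h^{-1}(C_i)$ and $C_i$ meet along $O$ and their union is a subcontinuum of $X$. Because $X$ is a one-dimensional acyclic continuum, every subcontinuum of $X$ is again acyclic (as already noted in the introduction), so $h^{-1}(C_i)\cup C_i$ is acyclic.

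For part (1), with $h$ orientation preserving, pick $p\in O$; then $\{h^{j}(p):j\in\mathbb{N}\}=O\subseteq C_i$, so Theorem \ref{main1} yields a fixed point of $h$ in $C_i$. Carrying this out for every $i$ and using that the $C_i$ are pairwise disjoint produces $n$ distinct fixed points in $X$. For part (2), with $h$ orientation reversing, Theorem \ref{main2} gives for each $i$ a point $q_i\in C_i$ of least period $2$; its orbit $O_i=\{q_i,h(q_i)\}$ lies in $X$, since $h(q_i)=h^{-1}(q_i)\in X$. Now I would count distinct period-$2$ orbits: a $2$-orbit has only two points, and as the $C_i$ are disjoint it can contain the chosen point $q_i$ for at most two indices $i$; thus the assignment $i\mapsto O_i$ is at most two-to-one, and the number of distinct orbits is at least $\lceil n/2\rceil=\lfloor (n+1)/2\rfloor$.

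The routine verifications are that each $C_i$ is a genuine continuum and that $h^{-1}(C_i)\cup C_i$ is connected and acyclic; the one place that genuinely needs care is the orientation-reversing count, where the obstruction is precisely that a single period-$2$ orbit may straddle two different components, which is exactly what forces the floor bound $\lfloor (n+1)/2\rfloor$ rather than $n$ separate orbits. The key enabling observation throughout is that the periodic orbit, being fully invariant, lies simultaneously in $C_i$ and in $h^{-1}(C_i)$, and this is what glues their union into a single subcontinuum of $X$ to which the acyclicity of $X$ can be transferred.
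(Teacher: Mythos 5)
Your proof is correct and follows essentially the same route as the paper: apply Theorem \ref{main1} (resp.\ Theorem \ref{main2}) to each of the $n$ components and then use the pairwise disjointness of the components to conclude that the assignment of period-$2$ orbits is at most two-to-one, yielding the bound $\left\lceil n/2\right\rceil=\left\lfloor\frac{n+1}{2}\right\rfloor$. The only differences are cosmetic: you explicitly verify that $h^{-1}(C_i)\cup C_i$ is connected by noting the fully invariant periodic orbit lies in both pieces (a point the paper leaves implicit), and you state the counting directly as a two-to-one bound where the paper argues by contradiction on a triple of components.
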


\section{Preliminaries}
Given a set $D$ by $\partial D$ we shall denote the boundary of $D$. Given a homeomorphism $h:\mathbb{R}^2\to\mathbb{R}^2$ we define $h^1(x)=h(x)$ and $h^{k+1}(x)=h\circ h^k(x)$ for all $k\in\mathbb{N}$ and $x\in\mathbb{R}^2$. Similarly, $h^{-k-1}(x)=h^{-1}\circ h^{-k}(x)$ for every $k\in\mathbb{N}$. A point $x\in\mathbb{R}^2$ is said to be {\itshape $k$-periodic} or {\itshape of least period $k$} if $h^k(x)=x$ and $h^i(x)\neq x$ for $i=1,\ldots,k-1$. If $k=1$, that is $h(x)=x$, then we say that $x$ is a {\itshape fixed point} of $h$. The {\itshape forward orbit} of $x$ is given by $\{h^n(x):n=1,2,3,\ldots\}$ and {\itshape backward orbit} of $x$ is given by $\{h^{-n}(x):n=1,2,3,\ldots\}$. If $x$ is $k$-periodic then the orbit of $x$ is given by $\{x,h(x),\ldots,h^{k-1}(x)\}$ and is said to be a {\itshape $k$-periodic orbit}. Following \cite{Bo2}, we say that two periodic orbits $\mathcal{O}$ and $\mathcal{O}'$ are {\itshape linked} if one cannot find a Jordan curve $C\subseteq\mathbb{S}^2$ separating $\mathcal{O}$ and $\mathcal{O}'$ which is freely isotopic to $h(C)$ in $\mathbb{S}^2\setminus\left(\mathcal{O}\cup\mathcal{O}'\right)$. $C$ and $h(C)$ are {\itshape freely isotopic} in $\mathbb{S}^2\setminus\left(\mathcal{O}\cup\mathcal{O}'\right)$ if there is an isotopy $\{i_t:\mathbb{S}^1\rightarrow \mathbb{S}^2\setminus\left(\mathcal{O}\cup\mathcal{O}'\right):0\leq t\leq 1\}$ from $i_0(\mathbb{S}^1)=C$ to $i_1(\mathbb{S}^1)=h(C)$; i.e. $i_t(\mathbb{S}^1)$ is a Jordan curve for any $t$ ($\mathbb{S}^1$ denotes the unit circle). Also, recall that if $U$ is an open surface and $(\tilde{U},\tau)$ is its universal covering space then given a homeomorphism $h:U\to U$ there exists a {\itshape lift} homeomorphism $\tilde{h}:\tilde{U}\to \tilde{U}$ such that the following diagram commutes.
\[
\begin{CD}
\tilde{U}@>\tilde{h}>> \tilde{U} \\
@V\tau VV @V\tau VV \\
U @>h>>U
\end{CD}
\]
Additionally if $h(x)=y$ then $\tilde{h}$ is uniquely determined by the choice of two points $\tilde{x}\in\tau^{-1}(x)$,  $\tilde{y}\in\tau^{-1}(y)$ and setting $\tilde{h}(\tilde{x})=\tilde{y}$.
\section{Proofs of the main results}
Our proof of Theorem \ref{main1} will follow the approach used by Brown in \cite{BM} to prove the Cartwright-Littlewood fixed point theorem. His proof by contradiction was based on an idea that an orientation preserving homeomorphism $h:\mathbb{R}^2\to\mathbb{R}^2$ with an acyclic continuum $X=h(X)$ and the fixed point set $\operatorname{Fix}(h)$ can be lifted to a homeomorphism $\tilde{h}$ of the universal covering $(\tilde{U},\tau)$ of a component of $\mathbb{R}^2\setminus \operatorname{Fix}(h)$ that will have a bounded orbit, yet no fixed point. This leads to a contradiction with the Brouwer's theorem. The approach works because $\tilde{U}$ is homeomorphic to $\mathbb{R}^2$ and the lift $\tilde{h}$ preserves orientation. Strictly speaking the existence of a bounded orbit of $\tilde{h}$ is guaranteed by the fact that $X$ lifts to disjoint homeomorphic copies (the components of $\tau^{-1}(X)$) and $\tilde{h}$ can be chosen so that one such copy is invariant. Indeed, it is well known that every acyclic continuum $X$ is the intersection of a decreasing sequence of closed disks $\{D_n:n\in\mathbb{N}\}$; i.e. $D_n\subseteq D_{n-1}$ for every $n$, and $X=\bigcap_{n\in\mathbb{N}}D_n$. In particular, $X$ has arbitrarily small simply connected neighborhoods that can separate $X$ from $\operatorname{Fix}(h)$. 
\begin{figure}
	\centering
		\includegraphics[width=0.70\textwidth]{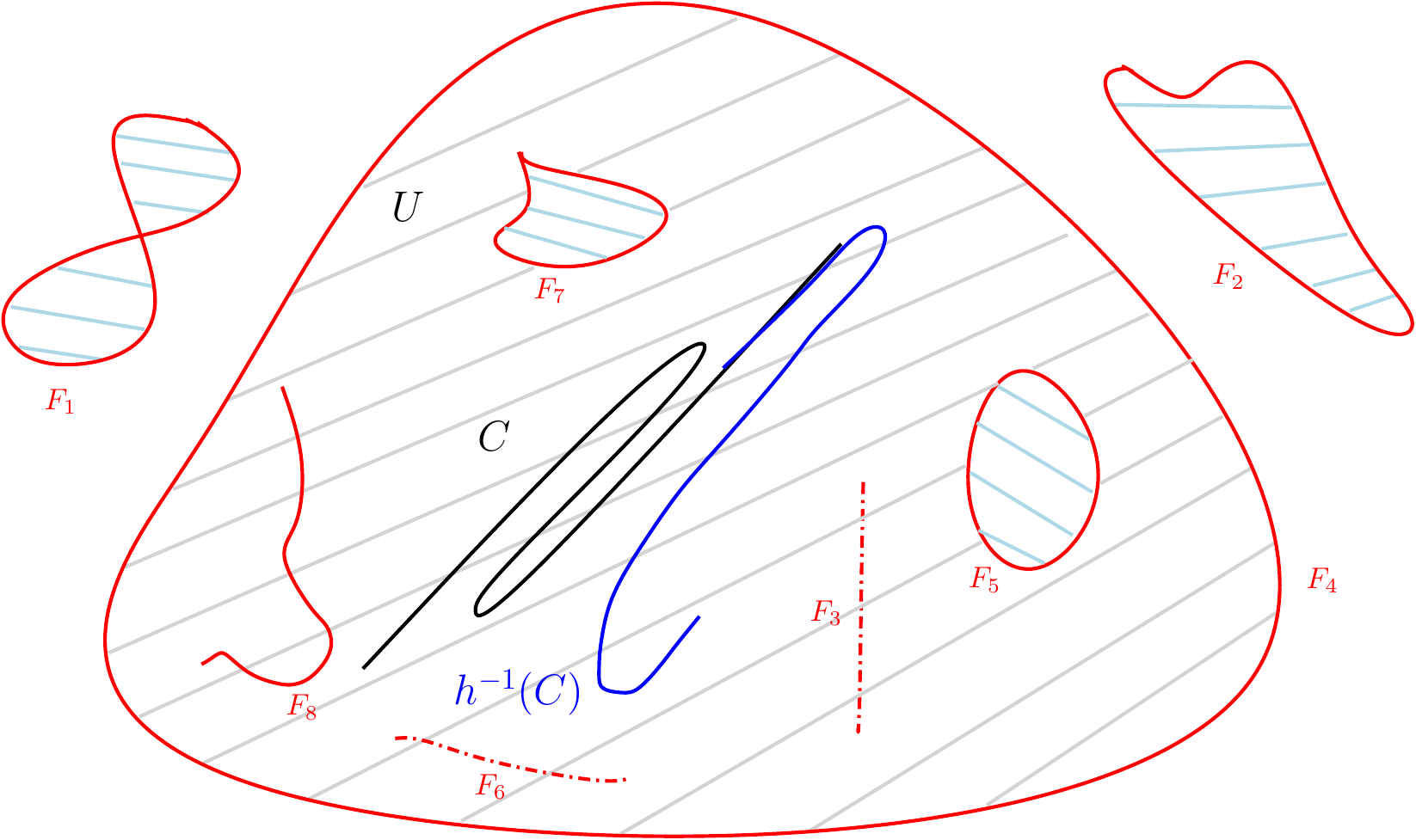}
	\label{fig:ostro2e}
	\caption{$U$ is the complementary domain of $\operatorname{Fix}(h)=\bigcup_{i=1}^8F_i$ that contains $C\cup h^{-1}(C)$.} 
\end{figure}

\begin{proof}(of Theorem \ref{main1})

Refer to Figure \ref{fig:ostro2e}. By contradiction, suppose that $\operatorname{Fix}(h)\cap C=\emptyset$. Let $U$ be the component of $\mathbb{R}^2\setminus \operatorname{Fix}(h)$ that contains $C$. Note that $h(U)=U$, since $h$ is onto, $h(U)$ must be contained in one of the complementary domains of $F$ and $h(U)\cap U\neq\emptyset$. Let $(\tau,\tilde{U})$ be the universal cover of $U$. Note that $\tilde{U}$ is homeomorphic to $\mathbb{R}^2$ and $C$ lifts to disjoint homeomorphic copies in $\tilde{U}$, being acyclic \cite{BM}. Let $\tilde{C}$ be one such a copy. 

CASE I. First suppose that $c$ has a backward orbit $\mathcal{O}=\{h^{-i}(c):i\in\mathbb{N}\}$ contained in $C$. Let $x=h^{-1}(c)$.  

There is a lift $\tilde{h}:\tilde{U}\to\tilde{U}$ of $h$ determined by $\tilde{h}(\tilde{C})\cap\tilde{C}\neq\emptyset$. Namely, let $\tilde{x}\in\tau^{-1}(x)\cap\tilde{C}$. If $\tilde{y}\in\tau^{-1}(c)\cap\tilde{C}$ then the condition $\tilde{h}(\tilde{x})=\tilde{y}$ uniquely determines the lift $\tilde{h}$. 
\begin{claim}\label{backwards}
$\tilde{h}^{-1}(\tilde{x})\in\tilde{C}$. 
\end{claim}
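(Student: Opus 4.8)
The plan is to lift the backward orbit of $c$ step by step into the single copy $\tilde{C}$, and this claim is the base step of that process; the engine driving it is the hypothesis that the \emph{union} $K:=h^{-1}(C)\cup C$ — not merely $C$ — is acyclic. First I would record that $K$ is an acyclic continuum: it is connected because $x=h^{-1}(c)$ lies both in $C$ (it is the first term of $\mathcal{O}$) and in $h^{-1}(C)$ (since $h(x)=c\in C$), so the continua $C$ and $h^{-1}(C)$ meet at $x$, and acyclicity is exactly the hypothesis. Being acyclic, $K$ lifts (as in \cite{BM}) to a disjoint family of copies each carried homeomorphically onto $K$ by $\tau$. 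Since $C\subseteq K$, the copy $\tilde{C}$ is contained in a unique lift $\tilde{K}$ of $K$, and I write $s:=(\tau|_{\tilde{K}})^{-1}:K\to\tilde{K}$ for the resulting section, so that $\tilde{C}=s(C)$. I also set $\tilde{C}_{-}:=s(h^{-1}(C))$, the lift of $h^{-1}(C)$ sitting inside $\tilde{K}$; because $x\in C\cap h^{-1}(C)$, the point $\tilde{x}=s(x)$ then lies in both $\tilde{C}$ and $\tilde{C}_{-}$.

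The crux is to prove $\tilde{h}^{-1}(\tilde{C})=\tilde{C}_{-}$. From the commuting diagram one checks $\tilde{h}^{-1}(\tau^{-1}(C))=\tau^{-1}(h^{-1}(C))$, so the homeomorphism $\tilde{h}^{-1}$ carries the component $\tilde{C}$ of $\tau^{-1}(C)$ onto a component of $\tau^{-1}(h^{-1}(C))$; that is, $\tilde{h}^{-1}(\tilde{C})$ is a lift of $h^{-1}(C)$. Moreover $\tilde{h}(\tilde{x})=\tilde{y}\in\tilde{C}$ gives $\tilde{x}=\tilde{h}^{-1}(\tilde{y})\in\tilde{h}^{-1}(\tilde{C})$. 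Thus $\tilde{h}^{-1}(\tilde{C})$ and $\tilde{C}_{-}$ are both lifts of the acyclic continuum $h^{-1}(C)$ that contain $\tilde{x}$, and since distinct lifts are disjoint they must coincide.

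It then follows that $\tilde{h}^{-1}(\tilde{x})\in\tilde{h}^{-1}(\tilde{C})=\tilde{C}_{-}\subseteq\tilde{K}$, while $\tau(\tilde{h}^{-1}(\tilde{x}))=h^{-1}(x)=h^{-2}(c)\in C$; since $\tilde{h}^{-1}(\tilde{x})$ lies in $\tilde{K}$ and projects into $C$, and $\tau|_{\tilde{K}}$ is injective, we get $\tilde{h}^{-1}(\tilde{x})=s(h^{-2}(c))\in s(C)=\tilde{C}$, as required. I expect the main obstacle to be exactly this final identification, where the hypothesis on the union is indispensable: acyclicity of $K$ is precisely what forces $\tilde{C}$ and the lift $\tilde{C}_{-}$ of $h^{-1}(C)$ through $\tilde{x}$ to lie in a \emph{single} homeomorphic copy $\tilde{K}$, making the global section $s$ consistent over the overlap $C\cap h^{-1}(C)$ and guaranteeing that the point of $\tilde{C}_{-}$ over $h^{-2}(c)\in C$ is genuinely a point of $\tilde{C}$. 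Were only $C$ assumed acyclic, $\tilde{C}$ and the lift of $h^{-1}(C)$ meeting it at $\tilde{x}$ could be distinct sheets over the overlap, and the conclusion would break down.
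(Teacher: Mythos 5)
Your argument is correct and is essentially the paper's own proof recast in direct form: your $\tilde{C}_{-}$ is the paper's $\tilde{Q}=\tilde{h}^{-1}(\tilde{C})$, and the injectivity of $\tau$ on the lift $\tilde{K}$ of the acyclic union $K=h^{-1}(C)\cup C$ is exactly the fact the paper invokes (by contradiction) when it rules out a second component $\tilde{C}'$ of $\tau^{-1}(C)$ inside one component of $\tau^{-1}(K)$. Both proofs hinge on the same two ingredients you identify: acyclicity of the union (not just of $C$) and the membership $h^{-2}(c)\in C$.
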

\begin{proof}(of Claim \ref{backwards})
Refer to Figure 3. We shall use the fact that under the covering map $\tau$ continua which map to acyclic continua must map one-to-one. Set $Q=h^{-1}(C)$ and let $\tilde{Q}$ be the component of $\tau^{-1}(Q)$ such that $\tilde{Q}=\tilde{h}^{-1}(\tilde{C})$. By contradiction suppose $\tilde{h}^{-1}(\tilde{x})\notin\tilde{C}$. Then there is another copy $\tilde{C}'$ of $C$ in $\tau^{-1}(C)$ such that $\tilde{h}^{-1}(\tilde{x})\in\tilde{C}'$. Note that $\tilde{h}^{-1}(\tilde{x})\in\tilde{Q}\cap\tilde{C}'$ and $\tilde{x}\in \tilde{Q}\cap\tilde{C}$. But then $\tilde{Q}\cup\tilde{C}\cup\tilde{C}'$ is a continuum with $\tau(\tilde{Q}\cup\tilde{C}\cup\tilde{C}')=C\cup h^{-1}(C)$, and $C\cup h^{-1}(C)$ is acyclic so $\tau^{-1}(C\cup h^{-1}(C))$ cannot contain two components of $\tau^{-1}(C)$. This leads to a contradiction.
\end{proof}
\begin{claim}\label{bounded}
$\tilde{h}^{-i}(\tilde{x})\in\tilde{C}$ for every $i$. 
\end{claim}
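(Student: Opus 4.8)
The plan is to prove Claim \ref{bounded} by induction on $i$, at each stage repeating verbatim the argument of Claim \ref{backwards}, but with the point $\tilde{x}$ replaced by $\tilde{x}_i:=\tilde{h}^{-i}(\tilde{x})$. The two lowest cases are immediate: $\tilde{x}_0=\tilde{x}\in\tilde{C}$ by the choice of the lift $\tilde{h}$, and $\tilde{x}_1=\tilde{h}^{-1}(\tilde{x})\in\tilde{C}$ is precisely the content of Claim \ref{backwards}. As the inductive hypothesis I would keep \emph{two} consecutive lifts in $\tilde{C}$, namely $\tilde{x}_{i-1}\in\tilde{C}$ and $\tilde{x}_i\in\tilde{C}$, and deduce from these that $\tilde{x}_{i+1}\in\tilde{C}$.

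For the inductive step I retain the fixed copy $\tilde{Q}=\tilde{h}^{-1}(\tilde{C})$ of $h^{-1}(C)$ from the proof of Claim \ref{backwards}. First I would record that this one component $\tilde{Q}$ contains both consecutive lifts: since $\tilde{x}_i\in\tilde{C}$ we get $\tilde{x}_{i+1}=\tilde{h}^{-1}(\tilde{x}_i)\in\tilde{h}^{-1}(\tilde{C})=\tilde{Q}$, and since $\tilde{h}(\tilde{x}_i)=\tilde{x}_{i-1}\in\tilde{C}$ we get $\tilde{x}_i\in\tilde{h}^{-1}(\tilde{C})=\tilde{Q}$, so that $\tilde{x}_i\in\tilde{Q}\cap\tilde{C}$. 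Projecting, $\tau(\tilde{x}_{i+1})=h^{-(i+2)}(c)$ lies in $C$ because the entire backward orbit $\mathcal{O}$ lies in $C$, so $\tilde{x}_{i+1}$ belongs to some component $\tilde{C}'$ of $\tau^{-1}(C)$. If $\tilde{x}_{i+1}\notin\tilde{C}$ then $\tilde{C}'\neq\tilde{C}$, and because $\tilde{x}_i\in\tilde{Q}\cap\tilde{C}$ and $\tilde{x}_{i+1}\in\tilde{Q}\cap\tilde{C}'$ the set $\tilde{Q}\cup\tilde{C}\cup\tilde{C}'$ is a continuum with $\tau(\tilde{Q}\cup\tilde{C}\cup\tilde{C}')=h^{-1}(C)\cup C$. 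As this is acyclic, $\tau^{-1}(h^{-1}(C)\cup C)$ cannot contain the two distinct components $\tilde{C},\tilde{C}'$ of $\tau^{-1}(C)$ --- exactly the contradiction obtained in Claim \ref{backwards}. Hence $\tilde{x}_{i+1}\in\tilde{C}$ and the induction closes.

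The one point that requires care, rather than being a genuine obstacle, is that the \emph{same} fixed copy $\tilde{Q}=\tilde{h}^{-1}(\tilde{C})$ must contain both $\tilde{x}_i$ and $\tilde{x}_{i+1}$ at every stage; this is what allows one to invoke only the one-step acyclicity of $h^{-1}(C)\cup C$ at each iteration, even though nothing is assumed about $h^{-2}(C)\cup C$ or higher. Carrying two consecutive lifts in $\tilde{C}$ through the induction guarantees this, and the requisite projections $h^{-j}(c)\in C$ all hold simply because $\mathcal{O}\subseteq C$. Thus Claim \ref{bounded} is nothing more than the iteration of Claim \ref{backwards}, and its proof carries no new difficulty. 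With it in hand, $\tilde{C}$ being a compact, hence bounded, homeomorphic copy of $C$ yields a bounded $\tilde{h}$-orbit, whence Brouwer's theorem forces a fixed point of $\tilde{h}$ and thus of $h$ in $U$, contradicting $\operatorname{Fix}(h)\cap C=\emptyset$.
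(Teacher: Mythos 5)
Your proof is correct and takes essentially the same route as the paper's: induct and rerun the argument of Claim \ref{backwards} with $\tilde{x}$ replaced by $\tilde{h}^{-j}(\tilde{x})$. The one detail you make explicit --- that the inductive hypothesis must supply \emph{two} consecutive lifts in $\tilde{C}$ so that $\tilde{h}^{-j}(\tilde{x})$ lies in $\tilde{Q}\cap\tilde{C}$ for the \emph{same} component $\tilde{Q}=\tilde{h}^{-1}(\tilde{C})$, which is what lets one reuse only the acyclicity of $h^{-1}(C)\cup C$ --- is exactly what the paper's instruction to ``repeat the same arguments'' under the hypothesis ``for all $i=1,\ldots,j$'' tacitly relies on.
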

\begin{proof}(of Claim \ref{bounded})
This follows by induction. Suppose we have already proved that $\tilde{h}^{-i}(\tilde{x})\in\tilde{C}$ for all $i=1,\ldots,j$. Now replace $\tilde{x}$ with $\tilde{h}^{-j}(\tilde{x})$ in the proof of the above claim and repeat the same arguments. It follows that $\tilde{h}^{-j-1}(\tilde{x})\in\tilde{C}$ and then, by induction, that $\tilde{h}^{-i}(\tilde{x})\in\tilde{C}$ for every $i$.
\end{proof}
\begin{figure}
	\centering
		\includegraphics[width=0.90\textwidth]{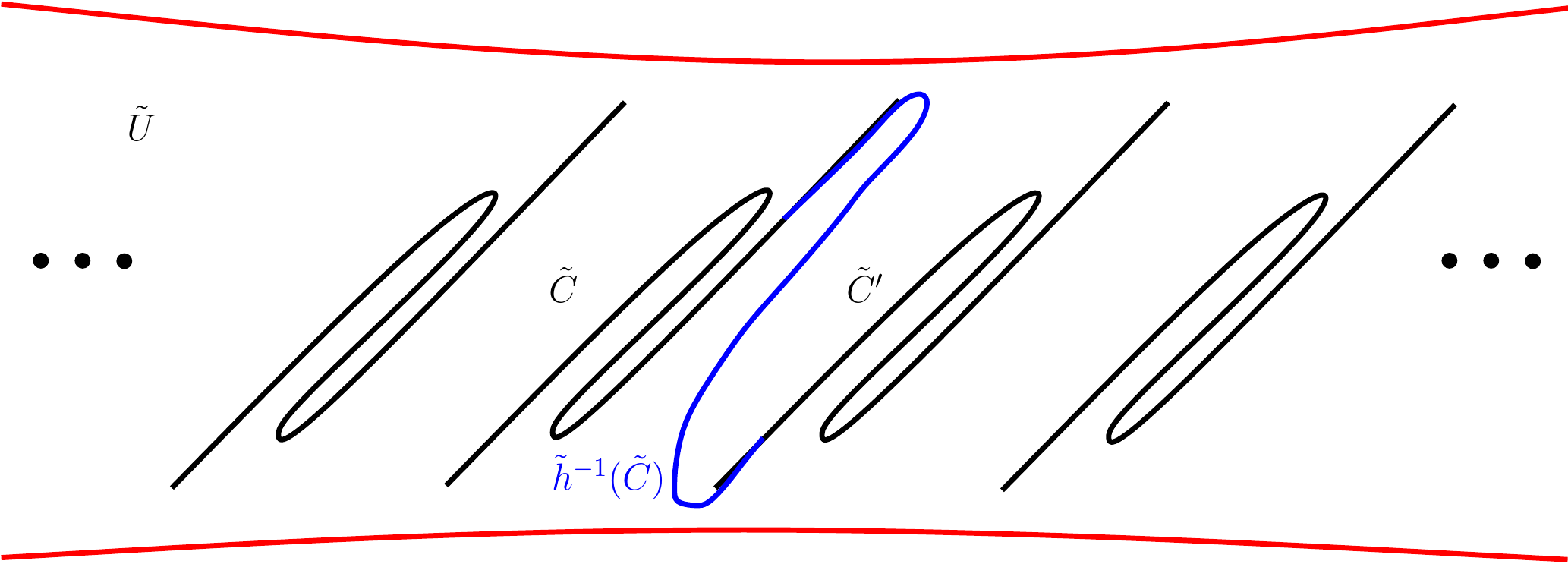}
	\label{fig:ostro2}
	\caption{Proof of Claim \ref{bounded}: $\tilde{h}^{-1}(\tilde{C})$ cannot intersect $\tilde{C}'$.} 
\end{figure}
To complete the proof of CASE I note that $\tilde{h}^{-1}:\tilde{U}\to\tilde{U}$ is a planar homeomorphism with $\{\tilde{h}^{-i}(\tilde{x}):i\in\mathbb{N}\}\subseteq \tilde{C}$. Since $\tilde{h}^{-1}$ has no fixed points we obtained a contradiction with Brouwer's Theorem \cite{Br}. 

CASE II. Now suppose that $c$ has a forward orbit $\mathcal{O'}=\{h^{i}(c):i\in\mathbb{N}\}$ contained in $C$. Then the proof of CASE I can be repeated because the fact that $h$ is a homeomorphism and $h^{-1}(C)\cup C$ is acyclic implies that $C\cup h(C)$ is acyclic as well.
\end{proof}
Now we shall prove Theorem \ref{main2}. Our proof, similar to \cite{BoJ} (proof of Theorem 3.1), will rely on the powerful result of Bonino on linked periodic orbits \cite{Bo2}. Recall that two periodic orbits $\mathcal{O}$ and $\mathcal{O}'$ of a homeomorphism $h$ are linked if one cannot find a Jordan curve $C\subseteq\mathbb{S}^2$ separating $\mathcal{O}$ and $\mathcal{O}'$ which is freely isotopic to $h(C)$ in $\mathbb{S}^2\setminus\left(\mathcal{O}\cup\mathcal{O}'\right)$. $C$ and $h(C)$ are freely isotopic in $\mathbb{S}^2\setminus\left(\mathcal{O}\cup\mathcal{O}'\right)$ if there is an isotopy $\{i_t:\mathbb{S}^1\rightarrow \mathbb{S}^2\setminus\left(\mathcal{O}\cup\mathcal{O}'\right):0\leq t\leq 1\}$ from $i_0(\mathbb{S}^1)=C$ to $i_1(\mathbb{S}^1)=h(C)$; i.e. $i_t(\mathbb{S}^1)$ is a Jordan curve for any $t$. 
\begin{proof}(of Theorem \ref{main2})
We shall show that, under the assumptions, if $C$ contains a $k$-periodic ($k>1$) orbit $\mathcal{O}$ then the 2-periodic orbit linked to $\mathcal{O}$, guaranteed by \cite{Bo2}, must intersect $C$. Note that if $k=2$ then there is nothing to prove, so assume $k>2$. Compactify $\mathbb{R}^2$ by a point $\infty$ to obtain $\mathbb{S}^2=\mathbb{R}^2\cup\{\infty\}$. Note that $g:\mathbb{R}^2\rightarrow\mathbb{R}^2$ can be extended to a homeomorphism $\tilde{g}:\mathbb{S}^2\rightarrow\mathbb{S}^2$ by setting $\tilde{g}|\mathbb{R}^2=g$, and $\tilde{g}(\infty)=\infty$. $g$ and $\tilde{g}$ have exactly the same $k$-periodic points for any $k>1$. Let $\mathcal{O}$ be the $k$-periodic orbit contained in $C$. By Bonino's result there is a 2-periodic orbit $\mathcal{O}'\subseteq\mathbb{S}^2$ that is linked to $\mathcal{O}$. We will show that $\mathcal{O}'\cap C\neq\emptyset$. This will imply that $\mathcal{O}'\subseteq g^{-1}(C)\cup C$.

Refer to Figure 4. By contradiction suppose that $\mathcal{O}'\cap C=\emptyset$. Then also $\mathcal{O}'\cap \tilde{g}^{-1}(C)=\emptyset$. Consequently $\mathcal{O}'\subseteq U$ for the complementary domain $U$ of $C$. Since $C\cup\tilde{g}^{-1}(C)$ is acyclic, it is the intersection of a family of closed disks with diameters decreasing to $0$, and there is a Jordan curve $S\subseteq \mathbb{S}^2$ separating $\mathcal{O}'$ from $C\cup\tilde{g}^{-1}(C)$. Let $U_\infty$ and $U_b$ be the two complementary domains of $S$, with $\infty\in U_\infty$. Then $C\cup\tilde{g}^{-1}(C)$ is contained in $U_b$ and $\mathcal{O}'\subseteq U_\infty$. Note that $\tilde{g}(S)$ is a Jordan curve that bounds $C$ (therefore also $\mathcal{O}$), and $\mathcal{O}'$ is contained in $\tilde{g}(U_\infty)$, as $\tilde{g}(\mathcal{O}'\cup\{\infty\})=\mathcal{O}'\cup\{\infty\}$. 
\begin{figure}
	\centering
		\includegraphics[width=0.70\textwidth]{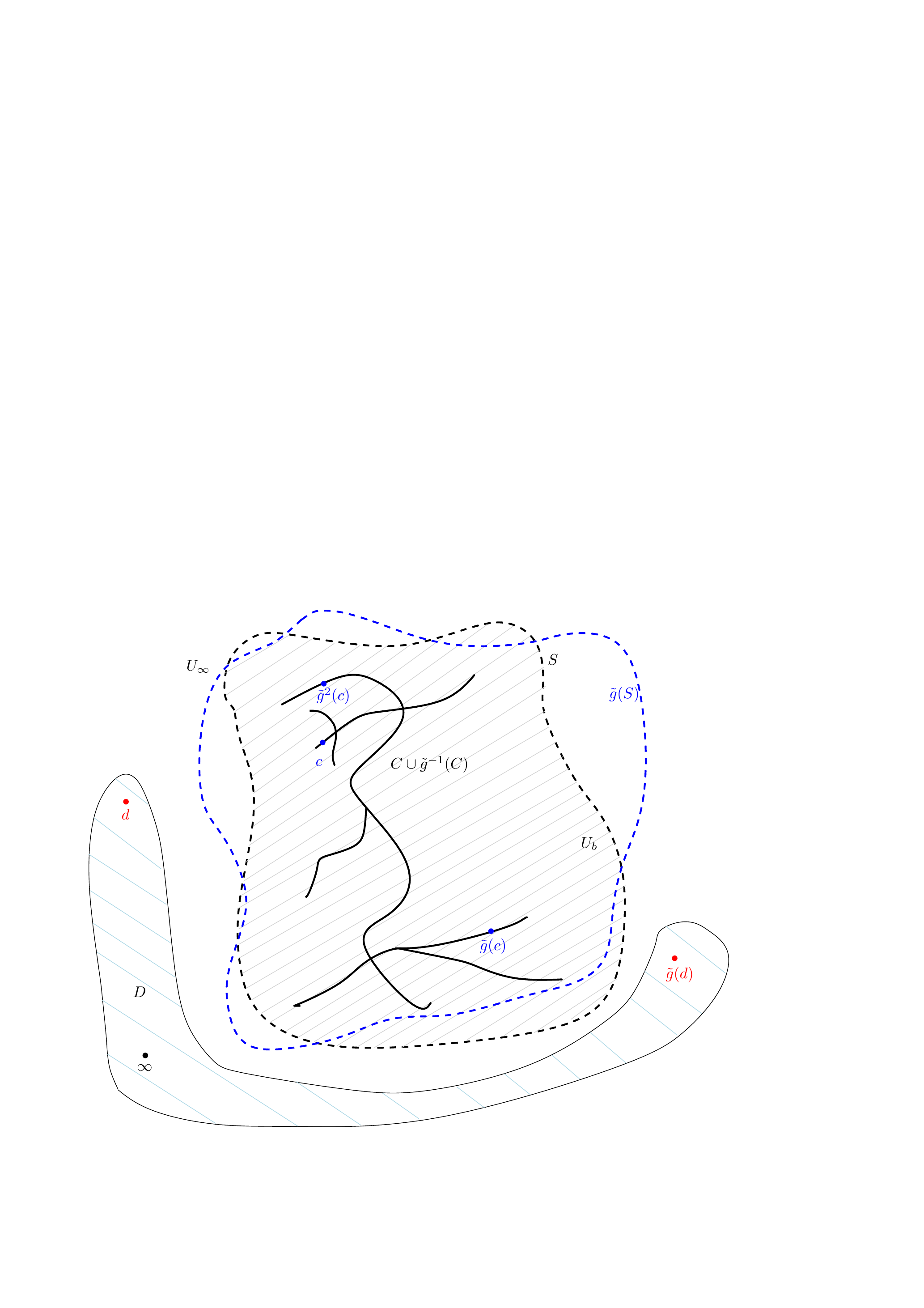}
	\label{fig:ostrovskiOR}
	\caption{Proof of Theorem \ref{main2} with $\mathcal{O}=\{c,\tilde{g}(c),\tilde{g}^2(c)\}$ and $\mathcal{O}'=\{d,\tilde{g}(d)\}$.}
\end{figure}
Let $D$ be a disk neighborhood of $\infty$ in $\mathbb{S}^2$ such that $\mathcal{O}'\subseteq D$ and $D$ is disjoint with $S\cup \tilde{g}(S)$. $D$ can be contracted to the point $\infty$. Then $\mathbb{S}^2\setminus (C\cup\{\infty\})$ is an open annulus that contains $S$ and $\tilde{g}(S)$ as essential Jordan curves. Therefore $S$ and $\tilde{g}(S)$ are freely isotopic in $\mathbb{S}^2\setminus (C\cup\{\infty\})$ and consequently in $\mathbb{S}^2\setminus (\mathcal{O}\cup\mathcal{O}')$. But this means that $\mathcal{O}$ and $\mathcal{O}'$ are not linked, leading to a contradiction.
\end{proof}
Finally we derive Corollary \ref{main3}.
\begin{proof}(of Corollary \ref{main3})
Let $C$ be a component of $h(X)\cap X$. First note that $h^{-1}(C)\cup C$ is an acyclic continuum by 1-dimensionality of $X$ and the fact that $h^{-1}(C)\cup C\subseteq X$. To prove (1) simply notice that by Theorem \ref{main1} $h$ will have a fixed point in each component of $X\cap h(X)$ that contains a periodic orbit. Since there are $n$ such components we deduce that there must be at least $n$ fixed points.

To prove (2) suppose that $C_1, C_2$ and $C_3$ are three components of $X\cap h(X)$ and $\mathcal{O}_1, \mathcal{O}_2$ and $\mathcal{O}_3$ are $k$-periodic orbits ($k>2$)  such that $\mathcal{O}_i\subseteq C_i$  for $i=1,2,3$. By Theorem \ref{main2} there are $2$-periodic orbits $\mathcal{O'}_1, \mathcal{O'}_2$ and $\mathcal{O'}_3$ such that $C_i\cap \mathcal{O'}_i\neq\emptyset$ for $i=1,2,3$. It suffices to show that $\mathcal{O'}_i\neq\mathcal{O'}_j$ for some $i,j\in\{1,2,3\}$. By contradiction suppose that $\mathcal{O'}_1=\mathcal{O'}_2=\mathcal{O'}_3$. Let $p\in \mathcal{O'}_1\cap C_1$. Then $p\notin C_2\cup C_3$. Therefore $h(p)\in C_2\cap C_3$. We obtain a contradiction since $C_2$ and $C_3$ are disjoint components of $X\cap h(X)$. 
\end{proof}

\section{Examples}\label{example}
 In this section we discuss restrictions on potential generalizations of our results. We shall exhibit that in Theorem \ref{main2} it is neither enough to require that $C$ contains an infinite (non-periodic) orbit, nor under the assumptions of Theorem \ref{main2} one can infer the existence of a fixed point.
\begin{theorem}
There is an orientation reversing homeomorphism $H$ and a continuum $C$ such that 
\begin{enumerate}
\item $H^{-1}(C)\cup C$ is an acyclic continuum and contains a $2k$-periodic orbit for every $k\geq 1$,
\item all orbits of $H$ are bounded,
\item $H$ has no fixed points.
\end{enumerate}
\end{theorem}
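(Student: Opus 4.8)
The plan is to build the example in the spirit of Boyles' homeomorphism \cite{By2}, which already supplies an orientation reversing homeomorphism of $\mathbb{R}^2$ with no fixed point and all orbits bounded, and to graft onto it a periodic structure realizing every even period inside a non-separating continuum. I would encode the periodic data in disjoint \emph{flip-and-rotate} blocks: for each $k\ge 1$ take two disjoint topological disks $A_k$, $A_k'$ and define $H$ so that it carries $A_k$ onto $A_k'$ and $A_k'$ onto $A_k$ orientation-reversingly, with $H^2|_{A_k}$ equal to the rotation of $A_k$ by $2\pi/k$ about its centre. Since odd iterates interchange the two disjoint disks, a point of $A_k$ off the centre first returns after $2k$ steps, so it has least period exactly $2k$, and $H$ has no fixed point on any block. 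Taking the blocks pairwise disjoint, with diameters tending to $0$ and accumulating onto a bounded limit set that $H$ sends to a disjoint copy of itself, forces every orbit meeting $\bigcup_k(A_k\cup A_k')$ to stay bounded and keeps the limit set free of fixed points.

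Two obstacles have to be confronted, and together they are the crux. The first is that one cannot replace the blocks by a single reflection: an orientation reversing homeomorphism with an invariant separating Jordan curve and no fixed point on it must interchange the two complementary sides, which produces unbounded orbits when one side is bounded, while a reflection across a line founders on the fact that a fixed-point-free homeomorphism of $\mathbb{R}$ is conjugate to a translation. The essential feature of Boyles' construction is that it never creates such an invariant separating curve, interleaving the interchanged pieces along a folded, accumulating pattern; I would preserve this and use the same global funnelling to extend $H$ over the rest of the plane so that distant orbits remain bounded and no new fixed point appears. The second obstacle is structural: the acyclic continuum $K:=H^{-1}(C)\cup C$ must \emph{not} be invariant, since by Bell's theorem \cite{BH} an invariant acyclic continuum would force a fixed point of $H$, contrary to the construction. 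The union of all blocks is $H$-invariant but disconnected, hence not a continuum and harmless; connectedness must instead be supplied by connecting arcs and limit sets arranged \emph{asymmetrically}, so that $H(K)\ne K$ even though $K$ still contains each chosen finite orbit.

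Finally I would take $K$ to be the closure of the union of the blocks, the connecting tree, and the limit sets, built as the intersection of a decreasing sequence of closed disks; such a nested intersection does not separate the plane, so $K$ is acyclic, and $C$ is chosen as a non-invariant subcontinuum with $H^{-1}(C)\cup C=K$. By construction $K$ contains an orbit of least period $2k$ for every $k\ge 1$, the orbits are bounded (each lies in finitely many blocks or limits into the bounded limit set), $H$ has no fixed point, and $H$ reverses orientation; consistency of the whole picture is guaranteed by Brouwer's theorem \cite{Br}, since $H^2$ preserves orientation and has bounded orbits and therefore has a fixed point, necessarily a $2$-periodic point of $H$, which is exactly the $k=1$ orbit. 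I expect the genuine difficulty to be the global assembly---fitting the local flip-and-rotate blocks into one homeomorphism of the whole plane that is at once fixed-point free, has only bounded orbits, and carries its periodic data inside a non-invariant, non-separating continuum---because these requirements pull against one another and none of the naive symmetric models meets them all.
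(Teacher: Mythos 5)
Your plan has the right ingredients --- Boyles' homeomorphism for boundedness and fixed-point freeness, pairs of swapped regions to manufacture even periods, and the correct (and worthwhile) observation that Bell's theorem forces $H^{-1}(C)\cup C$ to be non-invariant --- but it stops exactly where the content of the theorem lies. The step you defer as ``the genuine difficulty,'' namely assembling infinitely many flip-and-rotate blocks $A_k\cup A_k'$, their accumulation sets, and a connecting tree into one homeomorphism of the plane that is simultaneously fixed-point free, has only bounded orbits, and keeps $H^{-1}(C)\cup C$ acyclic, \emph{is} the proof; nothing in the proposal specifies how $H$ acts between the blocks, how continuity is arranged at the accumulation sets (the displacement on $A_k$ does not tend to $0$ if $A_k$ and $A_k'$ stay far apart), or why the connecting arcs can be mapped compatibly with everything else. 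As written this is a program, not a construction. A smaller point: your remark that an invariant separating Jordan curve without fixed points ``must interchange the two complementary sides, which produces unbounded orbits'' is garbled --- a planar homeomorphism cannot send the bounded side to the unbounded side at all, so the correct statement is that any invariant Jordan curve of an orientation-reversing planar homeomorphism must carry fixed points, since the restriction to the curve reverses its orientation.

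The paper's construction shows how to collapse all of this into something concrete, and it is where your proposal genuinely diverges. Instead of one pair of blocks per period, it takes a \emph{single} disk homeomorphism $\phi:D\to D$, equal to the identity on $\partial D$ and having periodic orbits of every period (obtained from the tent map via the Barge--Martin realization of the buckethandle as a disk attractor), places reflected copies of it on the two disks $D_{\pm1}$ centred at $(\pm2,0)$, and composes with the reflection $r$ about the $y$-axis. The resulting $h=g\circ r$ swaps the two disks, so each $k$-periodic orbit of $\phi$ becomes a $2k$-periodic orbit of $h$, and all even periods already live in $D_1\cup D_{-1}$; the only remaining defect --- fixed points in the strip $|x_1|<1$, where $h$ is the identity --- is repaired by splicing in Boyles' homeomorphism there, which at one stroke destroys the fixed points, keeps every orbit bounded, and leaves $H^{-1}(C)\cup C$ acyclic though non-invariant. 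If you want to salvage your version, the same idea applies: put all your rotation blocks inside one disk, extend to a homeomorphism of that disk fixing its boundary, and you are back to the two-disk picture with the global assembly already done for you.
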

\begin{proof} Refer to Figure 5. Let $D=\{(x_1,x_2)|x_1^2+x_2^2\leq 1\}$. Consider a homeomorphism $\phi:D\to D$ that has periodic orbits of all periods. (One such example can be easily obtained by \cite{BM1} and \cite{BaM2} from the tent map $f$ on the unit interval. The inverse limit $Y_f=\lim_{\leftarrow}\{f,[0,1]\}$ is the buckethandle continuum and the shift on the inverse limit extends to a disk homeomorphism with $Y_f$ as a chaotic attractor and all periods present). One can also easily assure that $\phi(x)=x$ for every $x\in\partial D$. Let $D_1=\{(x_1,x_2)|(x_1-2)^2+x_2^2\leq 1\}$ and $D_{-1}=\{(x_1,x_2)|(x_1+2)^2+x_2^2\leq 1\}$. Let $g(x_1,x_2)=\phi(x_1-2,x_2)$ for $(x_1,x_2)\in D_1$, $g(x_1,x_2)=\phi(-x_1+2,x_2)$ for $(x_1,x_2)\in D_{-1}$ and $g(x)=x$ if $x\notin(D_{1}\cup D_{-1})$. Set $h=g\circ r$, where $r$ is the reflection about the $y$-axis. Set $E=\{(x_1,x_2)|x_1^2+4x_2^2\leq 1\}$. Consider the continuum $X=D_1\cup D_{-1}\cup E$. To obtain the desired homeomorphism $H$, one easily modifies $h$ in the infinite strip $S=\{(x_1,x_2):|x_1|<1\}$, without any changes in the complement of $S$, so that every point in $S$ is moved slightly upward, yet $H^{-1}(C)\cup C$ is acyclic (note that $H(C)\cap C$ contains the 2-periodic orbit $\{(-1,0),(1,0)\}$). In fact this last modification can be obtained using Boyles' homeomorphism from \cite{By2} (since in her example all points outside of $S$ are $2$-periodic), which will guarantee that all orbits are bounded, as promised. Clearly $H$ has no fixed points, as all points in $S$ are moved and $\mathbb{R}^2\setminus S$ consists of two $2$-periodic disjoint closed regions $U_{-1}$ and $U_1$; i.e. $H(U_{-1})=U_1$ and $H(U_1)=U_{-1}$.
\begin{figure}
	\centering
		\includegraphics[width=0.90\textwidth]{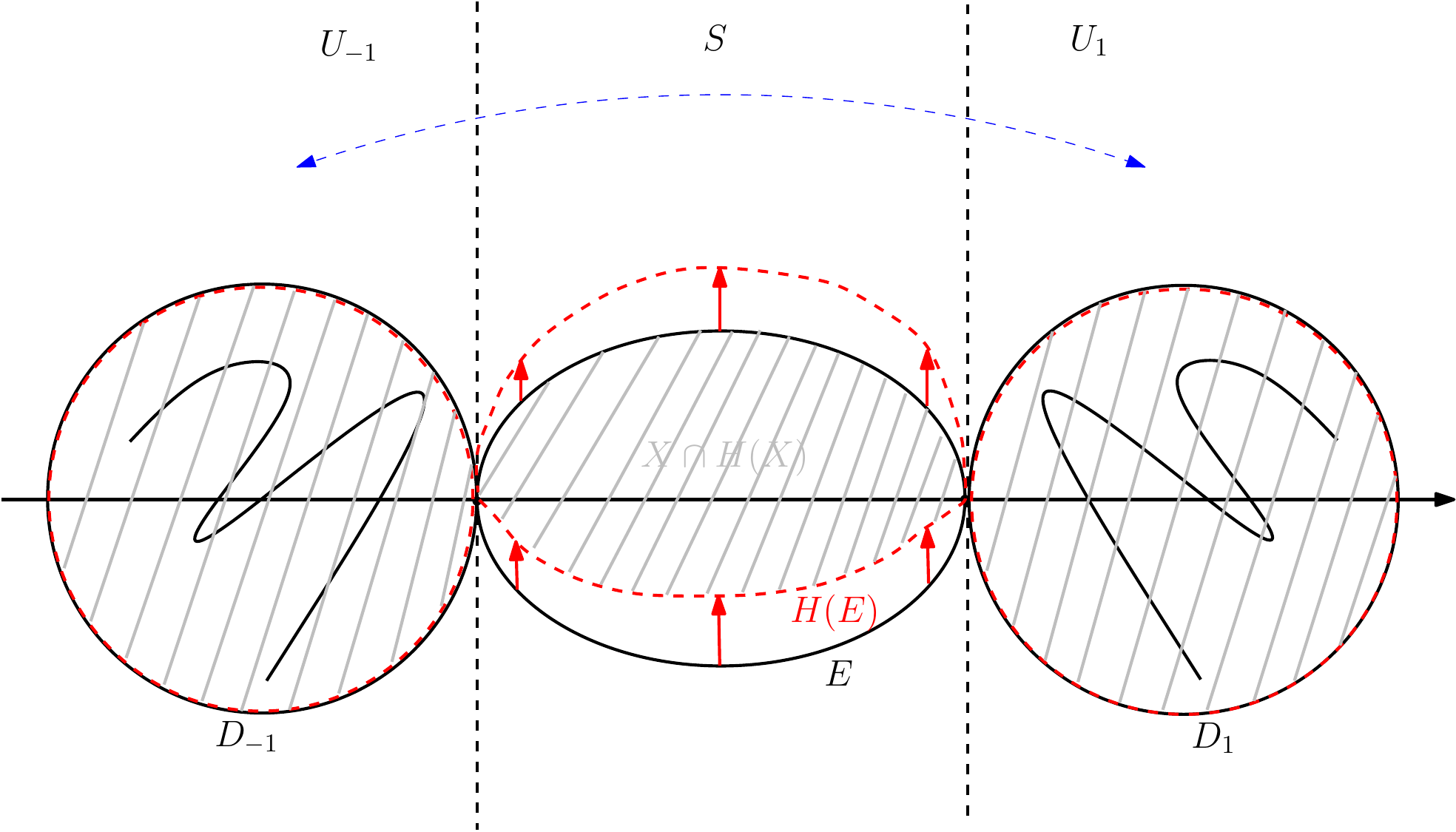}
	\label{fig:cloexample}
	\caption{The construction of the orientation reversing homeomorphism $H$.} 
\end{figure}
\end{proof}
 \begin{example}
There is an orientation reversing homeomorphism $h:\mathbb{R}^2\to\mathbb{R}^2$, and a continuum $C$ such that $h^{-1}(C)\cup C$ is an acyclic continuum and contains an infinite (non-periodic) orbit of $h$, but $h$ has no points of least period $2$. 
\end{example}
\begin{proof}
Consider the orientation reversing planar homeomorphism given by $h(x,y)=(-x-x|x|, y+y|1-y|)$. Then the arc $C=\{(x,y):x=0,y\in[0,1]\}$ is invariant under $h$, any $c\in C\setminus\{(0,0),(0,1)\}$ has an infinite (non-periodic) orbit in $C$ but there are no points of least period $2$ for $h$. 
\end{proof}
Note that the above example shows that in the assumptions of Bonino's result \cite{Bo2} one needs a $k$-periodic orbit ($k>1$), and non-periodic orbits do not force linked $2$-periodic orbits. We conclude with the following question.
\vspace{0.2cm}

\noindent
{\bfseries Question.} {\itshape Suppose $h : \mathbb{R}^2 \to  \mathbb{R}^2$ is an orientation preserving planar homeomorphism and $X$ is an acyclic continuum. Let $C$ be a component of $X\cap h(X)$. If there is a $c\in C$ such that $\{h^{-i}(c):i\in\mathbb{N}\}\subseteq C$ or $\{h^{-i}(c):i\in\mathbb{N}\}\subseteq C$ must $C$ also contain a fixed point of $h$?}
\section{Acknowledgments}
The author is indebted to the referee for careful reading of this paper, insightful comments and the many invaluable suggestions that greatly helped to improve the paper. 

This work was supported by the European Regional Development Fund in the IT4Innovations Centre of Excellence project (CZ.1.05/1.1.00/02.0070). Furthermore, the author gratefully acknowledges the partial support from the MSK DT1 Support of Science and Research in the Moravian-Silesian Region 2013\&2014 (RRC /05/2013) and  (RRC/07/2014). 
\bibliographystyle{amsplain}

\end{document}